\newtheorem{theorem}{Theorem}[section]
\newtheorem{prop}[theorem]{Proposition}
\newtheorem{coro}[theorem]{Corollary}
\newtheorem{lemma}[theorem]{Lemma}
\theoremstyle{definition}
\newtheorem{rem}[theorem]{Remark}
\begin{document}
\title[]{An Extended symmetric union and its Alexander polynomial}
\author{Teruaki KITANO \and Yasuharu NAKAE}
\address{Department of Information Systems Science, Faculty of Science and Engineering, Soka University, Tangi-machi 1-236, Hachioji, Tokyo, 192-8577, Japan}
\email{kitano@soka.ac.jp}
\address{Graduate School of Engineering Science, Akita University,
1-1 Tegata Gakuen-machi, Akita city, Akita, 010-8502, Japan.}
\email{nakae@math.akita-u.ac.jp}
\thanks{
\textit{Keyword and phrases:} Knot group, epimorphism, symmetric union, Alexander polynomial
}
\subjclass[2020]{Primary 57K10; Secondary 57M05}

\begin{abstract}
For prime knots $K_1$ and $K_2$, we write $K_1 \geq K_2$ if there is an epimorphism from the knot group of $K_1$ to that of $K_2$ which preserves the meridian.
We construct a family of pairs of knots with $K_1 \geq K_2$ such that an epimorphism maps the longitude of $K_1$ to the trivial element.
This construction is regarded as an extension of a symmetric union with a single full twisted region.
In particular, it extends a property of the Alexander polynomial of a symmetric union.
We also exhibit that all but two of the knots up to ten crossings in the list of Kitano-Suzuki, which have an epimorphism mapping the longitude to the trivial element, arise from this construction.
\end{abstract}

\maketitle

\setlength{\baselineskip}{14pt}

\section{Introduction}
Let $K$ be a knot in $S^3$, and let $G(K)=\pi_1(S^3\setminus K)$ denote the knot group of $K$.
For prime knots $K_1$ and $K_2$, we write $K_1 \geq K_2$ if there is an epimorphism $\varphi: G(K_1)\to G(K_2)$
and $\varphi$ preserves the meridian.
The relation $\geq$ becomes a partial order on the set of prime knots, see \cite{KS2008}.
The list of pairs of knots with such epimorphisms is studied by Kitano and Suzuki \cite{KS2005, KS2005C},
and by Horie, Kitano, Matsumoto, and Suzuki \cite{HKMS2011,HKMS2011E}.
We study the construction of a pair of knots with an epimorphism between their knot groups,
especially in cases when this epimorphism maps the longitude to the trivial element.
We construct such a family of pairs of knots 
and derive the properties of the Alexander polynomials as follows.

Let $D$ be a planar diagram of a knot $\hat{K}$ in $S^3$.
We take the diagram $D^\ast$ as the diagram of the knot obtained by reflecting $\hat{K}$ across the plane orthogonal to the plane on which the diagram $D$ lies.  
Let $T$ be a tangle such that the left-side endpoints are connected in $T$, and the right-side endpoints are also connected in $T$. 
We define a knot $K$ constructed from the diagram in Figure \ref{maintheorem_diagram}.

\begin{figure}[h]
	\centerline{\includegraphics[keepaspectratio]{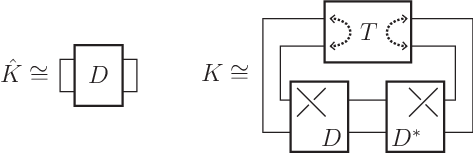}}
\caption{the diagram of a knot $K$}
\label{maintheorem_diagram}
\end{figure}

\begin{theorem}\label{maintheorem}

The knot $K$, constructed as described above, satisfies the following properties.
\begin{enumerate}
\item $\Delta_K(t)=\Delta_{K'}(t)\left(\Delta_{\hat{K}}(t)\right)^2$, where $K'$ be the numerator $N(T)$ of $T$.
\item There is an epimorphism $\varphi:G(K)\to G(\hat{K})$ such that
$\varphi$ maps a meridian of $K$ to a meridian of $\hat{K}$,
and maps the preferred longitude of $K$ to the trivial element in $G(\hat{K})$.
\end{enumerate}
\end{theorem}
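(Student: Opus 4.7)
The plan is to prove (2) first and then use the resulting structural picture of $G(K)$ to deduce (1). Both halves of the theorem rest on the same decomposition of the diagram of $K$ into three regions: the upper region carrying a copy of the diagram $D$ of $\hat K$, the lower region carrying its mirror $D^{\ast}$, and the middle region carrying the tangle $T$. I will organize the Wirtinger presentation of $G(K)$ accordingly, into generators $\{x_i\}$ coming from the arcs of $D$, generators $\{y_i\}$ coming from the arcs of $D^{\ast}$ (canonically in bijection with the $x_i$ by mirror symmetry), and generators $\{z_j\}$ coming from the arcs lying inside $T$.

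For (2), I would define $\varphi: G(K)\to G(\hat K)$ on generators by sending each $x_i$ to the meridian $\mu_i$ of $\hat K$ corresponding to the arc represented by $x_i$ in $D$, sending each $y_i$ to the same $\mu_i$, and sending each $z_j$ to whichever meridian $\mu_L$ or $\mu_R$ of $\hat K$ corresponds to the strand of $T$ on which $z_j$ lies. The hypothesis that the two left-side endpoints of $T$ are connected inside $T$ (and similarly on the right) is the key point: every Wirtinger relation occurring strictly inside $T$ involves only generators that map to a common element of $G(\hat K)$, hence collapses to a trivial relation. The Wirtinger relations coming from $D$ reduce to the Wirtinger relations of $G(\hat K)$, and those from $D^{\ast}$ do the same after noting that reversing crossing signs does not affect the relation modulo the $\mu_i$'s. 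The few relations at the $D$-$T$ and $T$-$D^{\ast}$ interfaces are to be checked directly. Surjectivity is automatic since every meridian $\mu_i$ of $\hat K$ is in the image, and the meridian condition is built into the definition. For the preferred longitude $\ell_K$, I would write it as a concatenation of four sub-paths through the regions $D$, $T$, $D^{\ast}$, $T$, with a framing correction by $-w(K)$; under $\varphi$ the $D$-contribution and the $D^{\ast}$-contribution are mutually inverse because $D^{\ast}$ is the mirror of $D$, the $T$-contributions lie in $\langle \mu_L \rangle \cup \langle \mu_R \rangle$ and cancel because they enter and exit the same strand, and the framing correction cancels because $w(D)+w(D^{\ast})=0$.

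For (1), I would use the Seifert algorithm on the given diagram of $K$. The resulting surface $F$ decomposes into bands coming from the three regions, producing Seifert surfaces $F_{\hat K}$, $F_{\hat K^{\ast}}$ and $F_{K'}$ joined along a small number of interface bands. Choosing a symplectic basis of $H_1(F)$ compatible with this decomposition yields a Seifert matrix $V$ in block form whose diagonal blocks are the Seifert matrices $V_{\hat K}$, $V_{\hat K^{\ast}}$ and $V_{K'}$, and whose off-diagonal blocks vanish or are strictly triangular for linking reasons forced by the mirror symmetry of $D$ and $D^{\ast}$. Expanding $\det(tV-V^{T})$ along this block structure and using $\Delta_{\hat K^{\ast}}(t)\doteq \Delta_{\hat K}(t)$ yields $\Delta_K(t)=\Delta_{\hat K}(t)^{2}\Delta_{K'}(t)$. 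As a cross-check and partial alternative, the existence of the longitude-killing epimorphism from (2) already forces $\Delta_{\hat K}(t)^{2}\mid \Delta_K(t)$, so only the identification of the remaining factor with $\Delta_{K'}(t)$ needs the Seifert calculation.

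The hard step I anticipate is the verification that the Seifert matrix truly has a block-diagonal (up to triangular) form: the interface bands between $T$ and $D$, $D^{\ast}$ are where linking numbers between the three blocks could spoil the factorization, and showing that the linking contributions either vanish or arrange into an upper-triangular pattern requires tracking precisely how the Seifert circles traverse the tangle $T$. A subsidiary technical point on the group-theoretic side is ensuring that if $T$ contains closed link components in addition to the two connecting arcs, their Wirtinger generators can all be sent consistently to the trivial element of $G(\hat K)$; the symmetry of the diagram and the condition on the endpoints of $T$ are exactly what make this consistent.
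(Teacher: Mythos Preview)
Your approach to (2) is essentially the paper's: both set up Wirtinger generators for the three regions, define $\varphi$ by folding $D^{\ast}$ onto $D$ and collapsing $T$, and then verify the relations and the longitude word. One point needs care: you send the arcs of $T$ to \emph{two} meridians $\mu_L,\mu_R$ and assert that every Wirtinger relation inside $T$ ``involves only generators that map to a common element.'' That fails at a crossing between the two strands of $T$ unless $\mu_L=\mu_R$. In the actual diagram all four endpoints of $T$ meet the \emph{same} arc $x_b$ of $\hat K$ (two via $D$, two via $D^{\ast}$), so the paper simply sends every arc of $T$ to the single meridian $\hat x_b$; once you note this, your argument and the paper's coincide.

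For (1) your route diverges from the paper's, and the hard step you flag is a genuine gap. The paper does not touch Seifert matrices. It writes $K=N(\tilde T+T_0)$ with $T_0=D+D^{\ast}$, applies Conway's fraction formula
\[
\nabla_{N(\tilde T+T_0)}=\nabla_{N(\tilde T)}\,\nabla_{D(T_0)}+\nabla_{D(\tilde T)}\,\nabla_{N(T_0)},
\]
and then proves a short symmetry lemma: $N(T_0)$ is a two-component link isotopic to its own mirror with all orientations reversed, forcing $\nabla_{N(T_0)}=-\nabla_{N(T_0)}=0$. Since $D(T_0)=\hat K\,\sharp\,\hat K^{\ast}$, this gives $\nabla_K=\nabla_{N(T)}\bigl(\nabla_{\hat K}\bigr)^{2}$ in one line. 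By contrast, your block-triangular Seifert matrix is not established, and for a general tangle $T$ (not merely twists) the Seifert algorithm need not produce a surface whose interface bands have the linking pattern you want; this is the crux, not a detail. Your cross-check is also unjustified as stated: a meridian-preserving epimorphism yields only $\Delta_{\hat K}\mid\Delta_K$, and the longitude-killing hypothesis does not by itself upgrade this to $\Delta_{\hat K}^{2}\mid\Delta_K$. The skein argument is both shorter and complete; I would replace the Seifert-matrix plan with it.
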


Where for a diagram $D$ with four endpoints arranged at the corners,
the numerator $N(D)$ of $D$ is constructed by connecting the upper endpoints to each other
and by also connecting the lower endpoints.
A simple closed curve on the boundary of the tubular neighborhood of a knot is referred to as the preferred longitude if it bounds a Seifert surface.
In this paper, a longitude refers to a preferred longitude.

We shall prove Theorem \ref{maintheorem} in Section 2 and exhibit examples constructed from the framework of Theorem \ref{maintheorem} in Section 3.

Kinoshita and Terasaka introduced a symmetric union of knots \cite{KT1957},
and Lamm generalized the definition and their results \cite{L2000}.
For the general definition of a symmetric union of knots, see \cite{L2000}.

In \cite{KT1957},
Kinoshita and Terasaka showed that a symmetric union of a partial knot $\hat{K}$ has the following property regarding its Alexander polynomial.

\begin{theorem}\label{symunionalexander}\cite[Theorem 2]{KT1957}
Let $K$ be a symmetric union of a partial knot $\hat{K}$ with a single full-twisted region.
Then $\Delta_K(t)=\left(\Delta_{\hat{K}}(t)\right)^2$.
\end{theorem}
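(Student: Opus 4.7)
The plan is to deduce Theorem \ref{symunionalexander} as an immediate corollary of Theorem \ref{maintheorem}. The key observation is that a symmetric union of $\hat{K}$ with a single full-twisted region fits as a special case of the construction in Figure \ref{maintheorem_diagram}, for an appropriate choice of the tangle $T$.

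First, I would identify the classical Kinoshita--Terasaka construction with the one in Figure \ref{maintheorem_diagram}. Concretely, take $T$ to be the rational tangle whose interior consists of an arc joining the two left endpoints and an arc joining the two right endpoints, together with a single full twist inserted between these two arcs along the vertical axis. With $D$ and $D^\ast$ placed on either side of $T$ as in Figure \ref{maintheorem_diagram}, the resulting knot $K$ coincides (up to planar isotopy) with the symmetric union of $\hat{K}$ with one full-twisted region, with $D$ and $D^\ast$ serving as the partial knot and its mirror across the axis.

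Second, I would compute $K' = N(T)$. Closing the upper and lower pairs of endpoints of $T$ joins the two vertical strands of $T$ into a single closed curve in $S^3$, which, no matter how many full twists are inserted in $T$, is unknotted. Hence $K'$ is the unknot and $\Delta_{K'}(t) = 1$.

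Finally, Theorem \ref{maintheorem}(1) yields
\[
\Delta_K(t) \;=\; \Delta_{K'}(t)\bigl(\Delta_{\hat{K}}(t)\bigr)^2 \;=\; \bigl(\Delta_{\hat{K}}(t)\bigr)^2,
\]
as required. The only step requiring real attention is the diagrammatic identification of the symmetric union with the construction of Figure \ref{maintheorem_diagram}; this amounts to a routine planar isotopy matching the twist region of the symmetric union with the tangle $T$, and checking that the reflection conventions for $D^\ast$ agree in both setups.
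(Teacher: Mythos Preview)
Your proposal is correct and is precisely the approach the paper itself indicates. The paper does not give an independent proof of Theorem~\ref{symunionalexander} (it is quoted from \cite{KT1957}); instead, immediately after Theorem~\ref{symunionlongitude} the authors remark that a symmetric union with a single full-twisted region is the special case $T=T(1/n)$, $n$ a nonzero even integer, of the construction in Theorem~\ref{maintheorem}, so that Theorem~\ref{maintheorem} ``inherits and extends'' Theorem~\ref{symunionalexander}. Your computation $N(T)=\text{unknot}$, hence $\Delta_{K'}(t)=1$, is exactly what is needed to complete that deduction, and it is corroborated by Table~\ref{tableofexample1}, where the $\star$-marked symmetric unions ($8_{20}$, $10_{140}$, $10_{137}$) all have trivial $N(T(\beta/\alpha))$. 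One small wording point: ``a single full-twisted region'' means one twist \emph{region} containing an arbitrary even number of half-twists, not literally one full twist; you handle this correctly in your second paragraph, but your initial description of $T$ should say ``some number of full twists'' rather than ``a single full twist''.
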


Eisermann showed that a symmetric union of knots with multiple full-twisted regions has the following property regarding the existence of an epimorphism that maps the longitude to the trivial element.

\begin{theorem}\label{symunionlongitude}\cite[Theorem 3.3]{L2000}
Let $K$ be a symmetric union of a partial knot $\hat{K}$ such that the twisting regions of $K$ are all full-twists.
Then there is an epimorphism $\varphi: G(K)\to G(\hat{K})$ that maps the meridian of $K$ to the meridian of $\hat{K}$ and maps the longitude of $K$ to the trivial element.
\end{theorem}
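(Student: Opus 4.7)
The plan is to construct $\varphi$ directly from a Wirtinger presentation of $G(K)$ adapted to the symmetric union structure, and then to analyze the image of the longitude using a combination of a centralizer argument and a diagram-level bookkeeping. Let $D$ be the partial knot tangle whose appropriate closure is $\hat K$, and let $D^*$ be its mirror image placed to the right of $D$, with full-twist tangles along the axis. Label the Wirtinger generators of $G(K)$ as $m_i$ for arcs of $D$, $m_i^*$ for the mirror arcs in $D^*$, and introduce auxiliary generators for arcs within each full-twist box; write $\bar m_i$ for the Wirtinger generators of $G(\hat K)$ coming from $D$. I set $\varphi(m_i)=\bar m_i$ and $\varphi(m_i^*)=\bar m_i^{-1}$, extending to the twist-region arcs via the Wirtinger relations inside each twist. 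The sign-inverting assignment on $D^*$ is essentially forced: each crossing in $D^*$ is the mirror of a crossing in $D$ and so has opposite sign, and it is precisely the choice $\varphi(m_i^*)=\bar m_i^{-1}$ that turns each mirror Wirtinger relation into the corresponding relation of $G(\hat K)$.

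For well-definedness the nontrivial check concerns the twist regions. Inside a single full-twist box the two strands have bottom meridians $a$ (from $D$) and $b$ (from $D^*$); a standard computation with the Artin representation of $B_2$ gives $\sigma_1^2(a,b)=((ab)a(ab)^{-1},\,(ab)b(ab)^{-1})$, so the two top meridians are both conjugated by the product $ab$. Under $\varphi$ the conjugator maps to $\bar m\cdot\bar m^{-1}=1$, so the images of the top meridians agree with those of the bottom meridians, exactly as in $\hat K$ where removing the twist lets each strand continue with its original meridian. The Wirtinger relations at the two crossings of each full twist are therefore respected, so $\varphi$ is a well-defined homomorphism; surjectivity is immediate because the $\bar m_i$ generate $G(\hat K)$, and by construction $\varphi$ sends the meridian $m_i$ of $K$ to the meridian $\bar m_i$ of $\hat K$.

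It remains to show $\varphi(\lambda)=1$; I may assume $\hat K$ is nontrivial, since otherwise $G(\hat K)$ is abelian and the claim follows from the vanishing of $\lambda$ in the abelianization. Because $\lambda$ commutes with a meridian $m$ of $K$ and $\varphi(m)=\bar m$, the image $\varphi(\lambda)$ commutes with $\bar m$ and hence lies in the peripheral subgroup $\langle\bar m,\bar\lambda\rangle\cong\mathbb Z^2$ by malnormality; combined with $\lambda\mapsto 0$ in the abelianization, this gives $\varphi(\lambda)=\bar\lambda^q$ for some integer $q$. To see $q=0$, compute $\lambda$ as a Wirtinger word: a product of over-arc meridians signed by crossing sign, corrected by a power of $m$ equal to the negative of the writhe of the symmetric union diagram. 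Pair each crossing of $D$ with its mirror partner in $D^*$: the sign flip from the mirror is exactly compensated by $\varphi(m_j^*)=\bar m_j^{-1}$, so their contributions under $\varphi$ become identical letters but are traversed in opposite senses along $K$, and the pairs cancel; each full twist contributes writhe $\pm 2$ whose correction exactly cancels the image of its two over-arc letters.

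The main technical obstacle is this final longitude bookkeeping: the cancellations depend delicately on orientation conventions along $K$, on the precise pairings of mirrored under-crossings between $D$ and $D^*$, and on the writhe correction. The full-twist hypothesis is used essentially here: with a half-twist the conjugator $ab$ would be replaced by a word carrying an extra factor of $a$ or $b$, whose image under $\varphi$ is a nontrivial meridian of $\hat K$, and a nonzero power of $\bar\lambda$ would remain in $\varphi(\lambda)$.
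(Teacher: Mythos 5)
There is a fatal error at the first step: the assignment $\varphi(m_i)=\bar m_i$, $\varphi(m_i^\ast)=\bar m_i^{-1}$ cannot define a homomorphism $G(K)\to G(\hat K)$. Since $K$ is a knot, any two Wirtinger generators of $G(K)$ are conjugate (consecutive arcs are related by $x_k=x_j^{\pm1}x_ix_j^{\mp1}$ and the diagram is connected), so their images must be conjugate in $G(\hat K)$; but $\bar m_i$ and $\bar m_j^{-1}$ map to $+1$ and $-1$ in $H_1(S^3\setminus\hat K)\cong\mathbb Z$ and hence are never conjugate. The source of the mistake is orientation bookkeeping: in a symmetric union the reflection carrying $D$ to $D^\ast$ reverses the orientation induced from $K$, and it is this reversal --- not an inversion of generators --- that turns each mirrored Wirtinger relation into the corresponding relation of $G(\hat K)$. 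The paper's proof of its Theorem 2.1-analogue (Theorem 1.1(2), on which this statement is modeled) puts the opposite orientation on $D^\ast$ and sends $x_i\mapsto\hat x_i$ and $x_i^\ast\mapsto\hat x_i$ with no inversion, and sends every arc of each twist region to the single meridian $\hat x_b$, so that each twist-region relation maps to $\hat s\hat s\hat s^{-1}\hat s^{-1}=1$. Your twist-box computation, in which the conjugator $ab$ maps to $\bar m\,\bar m^{-1}=1$, depends on the inconsistent inverted assignment and does not survive the correction.

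Second, the step that carries all the content --- showing $\varphi(\lambda)=1$ rather than $\bar\lambda^q$ with $q\neq 0$ --- is only sketched, and the sketch does not actually produce a cancellation: ``identical letters traversed in opposite senses'' yields two copies of the same letter at separated positions of the longitude word, not an adjacent pair $u\,u^{-1}$. What makes the argument work, and what the paper verifies in detail, is that the longitude word is palindromically nested, $\ell=w_m\cdots w_1\,w_1^\ast\cdots w_m^\ast$ with the twist contributions inserted symmetrically, that $\varphi(w_k^\ast)=\varphi(w_k)^{-1}$, and that the twist contributions map to $1$; the word then telescopes from the inside out, with no need for a writhe correction or a peripheral-subgroup argument. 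Your reduction of $\varphi(\lambda)$ to $\bar\lambda^q$ via the centralizer of the meridian is a reasonable idea but does not avoid this combinatorial step, and it is also not justified as stated: the centralizer of a meridian equals the peripheral subgroup for prime knots (though not ``by malnormality''; peripheral subgroups of torus knot groups are not malnormal), but for a composite partial knot $\hat K$ the centralizer of the meridian is strictly larger than $\mathbb Z^2$, so the reduction fails in the generality the theorem requires.
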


In Theorem \ref{maintheorem},
if the tangle $T$ is equivalent to $n$ twists, namely $T=T(1/n)$ for some non-zero even integer $n$,
the knot constructed by way of Theorem \ref{maintheorem} is a symmetric union of the partial knot $\hat{K}$
with a single full-twisted region.
Therefore, the consequences of Theorem \ref{maintheorem} inherit and extend the properties of a symmetric union of knots.

Theorem \ref{maintheorem} holds even if the partial knot $\hat{K}$ is trivial and the diagram $D$ induced by $\hat{K}$ is also trivial.
By taking a non-trivial diagram $D$ induced by a trivial knot $\hat{K}$,
one can construct the so-called Kinoshita-Terasaka knot \cite{KT1957}, whose Alexander polynomial is trivial,
as the symmetric union of $\hat{K}$ with the twisting region equivalent to $T=(1/2)$.

\begin{rem}
A similar construction of knots in Theorem \ref{maintheorem} appears in \cite{IMS2015} as an RTR knot.
But in our construction, the tangles $D$ and $T$ are not necessarily rational tangles.
\end{rem}

The existence of an epimorphism between knot groups leads to some interesting properties among these knots.

If $K_1 \geq K_2$, the Alexander polynomial $\Delta_{K_1}(t)$ is divisible by $\Delta_{K_2}(t)$,
so it is clearly satisfied for the knots $K$ and a partial knot $\hat{K}$ in Theorem \ref{maintheorem}.

If $K_1 \geq K_2$ and $K_1$ is a fibered knot, then $K_2$ is also fibered \cite[Proposition 2.2]{KS2008}.
But the converse is not true.
For example, $10_{65}\geq 3_1$ in the list of \cite{KS2005}, and the trefoil $3_1$ is fibered, but $10_{65}$ is not fibered.
This can be easily concluded from Theorem \ref{maintheorem}, as follows.
For the knot  $10_{65}$,
we can see $\hat{K}=3_1$ and $N(T)=5_2$
as the knot constructed using Theorem \ref{maintheorem}
(this construction will be explained in Section 3).
The Alexander polynomial of $5_2$ is $\Delta_{5_2}(t)=2t^2-3t+2$, and that of the trefoil is $\Delta_{3_1}(t)=t^2-t+1$.
Then $\Delta_{10_{65}}(t)=\Delta_{5_2}(t)\cdot(\Delta_{3_1}(t))^2$ is not monic.
Therefore, we can see $10_{65}$ is not fibered since the Alexander polynomial of a fibered knot must be monic.

If there is a degree one map $f:(E(K_1), \partial E(K_1))\to (E(K_2), \partial E(K_2))$, the induced map $f_\ast : G(K_1)\to G(K_2)$ is an epimorphism \cite[Proposition 3]{BBRW2016}.
If $K_1 \geq K_2$, $K_1$ is a hyperbolic 2-bridge knot, and $K_2$ is a non-trivial knot, 
then the epimorphism $\varphi : G(K_1) \to G(K_2)$ is induced from a non-zero degree map,
and $K_2$ is also a 2-bridge knot \cite[Corollary 1.3]{BBRW2010}.
In Theorem \ref{maintheorem},
since the epimorphism $\varphi$ preserves the meridian and maps the longitude to the trivial element,
a map $f:(E(K), \partial E(K))\to (E(\hat{K}), \partial E(\hat{K}))$ which induces the epimorphism $\varphi$
is of degree zero  (cf.\,\cite{KS2008}).
Focusing on 2-bridge knots and Montesinos knots,
we can derive the following corollary from Theorem \ref{maintheorem}.
For the definitions of a rational tangle and a Montesinos knot, see \cite{BZ3rd}, \cite{HM2006}.

\begin{coro}\label{coro1.5}
Let $\hat{K}$ be a fibered 2-bridge knot.
Then there are infinitely many non-fibered Montesinos knots $K$
such that $K\geq \hat{K}$ and the longitude of $K$ is mapped to the trivial element.
\end{coro}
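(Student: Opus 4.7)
The plan is to feed Theorem \ref{maintheorem} a carefully chosen one-parameter family of rational tangle inputs, and then verify that the resulting knots have the required Montesinos structure, a non-monic Alexander polynomial, and come in infinitely many isotopy classes.

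Since $\hat{K}$ is a 2-bridge knot, I would first take the diagram $D$ to arise from a rational tangle presentation of $\hat{K}$, so that both $D$ and its mirror $D^\ast$ are rational tangle diagrams. Next, I would choose an infinite family $\{T_n\}$ of rational tangles satisfying the endpoint condition in Theorem \ref{maintheorem} (left-side endpoints joined in $T$, right-side endpoints joined in $T$); among the tangles $T(p/q)$ this condition selects one of the two parity classes, which still provides an infinite family. Within this family I would further restrict to those $T_n$ for which the 2-bridge knot $N(T_n)$ has non-monic Alexander polynomial (for instance, by the model of $N(T)=5_2$ used in the $10_{65}$ example), and for which the polynomials $\Delta_{N(T_n)}(t)$ are pairwise distinct; both restrictions are mild enough to still leave infinitely many tangles available.

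With these choices, the knot $K_n$ produced by Theorem \ref{maintheorem} is assembled from three rational tangle pieces $D$, $T_n$, $D^\ast$ joined in a linear sum with a standard closure, hence it can be isotoped to a Montesinos knot. By Theorem \ref{maintheorem}(2), $K_n \geq \hat{K}$ and the epimorphism sends the longitude of $K_n$ to the trivial element. By Theorem \ref{maintheorem}(1),
\[
\Delta_{K_n}(t) = \Delta_{N(T_n)}(t) \cdot \bigl(\Delta_{\hat{K}}(t)\bigr)^2.
\]
Because $\hat{K}$ is fibered, $\Delta_{\hat{K}}(t)$ is monic, so the non-monicity of $\Delta_{N(T_n)}(t)$ is inherited by $\Delta_{K_n}(t)$, forcing each $K_n$ to be non-fibered. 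Pairwise distinct Alexander polynomials $\Delta_{N(T_n)}(t)$ then yield pairwise distinct $\Delta_{K_n}(t)$ and hence pairwise distinct isotopy classes, completing the construction of the claimed infinite family.

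The main obstacle will be the clean identification of $K_n$ with a Montesinos knot in the standard sense: one must read off from Figure \ref{maintheorem_diagram} that, when $D$ and $T_n$ are rational tangles, the resulting diagram can indeed be isotoped into a tangle sum of rational tangles with the numerator closure that defines a Montesinos knot. A secondary, but routine, point is checking that the endpoint-parity constraint on $T$ still permits scanning enough slopes $p/q$ so that $N(T(p/q))$ realizes infinitely many non-monic Alexander polynomials; this should follow from the classical formulas for 2-bridge knot Alexander polynomials, but it is worth being explicit about.
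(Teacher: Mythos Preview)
Your proposal is correct and follows essentially the same approach as the paper: choose $D$ rational because $\hat{K}$ is 2-bridge, choose $T$ rational with the correct endpoint parity so that $N(T)$ is a non-fibered 2-bridge knot, apply Theorem~\ref{maintheorem}, and read off non-fiberedness of $K$ from the non-monic factor $\Delta_{N(T)}(t)$ in $\Delta_K(t)=\Delta_{N(T)}(t)(\Delta_{\hat K}(t))^2$. The only substantive difference is that where you leave the existence of infinitely many suitable $T$ as a ``routine'' check, the paper pins it down concretely: the endpoint condition is $\alpha$ even, such $\beta/\alpha$ admit an even continued fraction $[c_1,\dots,c_r]$ of odd length, and $b(\beta/\alpha)$ is fibered iff every $|c_i|=2$, so taking some $|c_i|>2$ produces infinitely many non-fibered $N(T)$ (and Murasugi's theorem on alternating knots converts non-fibered to non-monic).
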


The proof of Corollary \ref{coro1.5} is explained in Section 2.3.

\begin{rem}
All of the non-fibered Montesinos knots $K$ constructed in Corollary \ref{coro1.5} are not 2-bridge knots.
By \cite[Corollary 1.3]{BBRW2010} explained before,
$K$ cannot be a 2-bridge knot because the partial knot $\hat{K}$ is a 2-bridge knot and the epimorphism $\varphi$ is induced from a zero-degree map.
\end{rem}

\section{Proof of Main Theorem}
This section is organized into three subsections for the proofs of (1) and (2) of Theorem \ref{maintheorem}, as well as the proof of Corollary \ref{coro1.5}.

\subsection{Proof of Theorem \ref{maintheorem} (1)}

$\nabla_K(z)$ denotes the Conway polynomial of a knot $K$.
In order to prove Theorem \ref{maintheorem} (1),
we will show that
$
\nabla_K(z)
=\nabla_{N(\tilde{T})}(z)\left(\nabla_{\hat{K}}(z)\right)^2
$,
where $\tilde{T}$ is the tangle obatained by rotating $T$ by an angle of $\pi$. 

We write the tangle obtained by removing two arcs from the diagram $D$ of $\hat{K}$ as the same symbol $D$.
Let a tangle $T_0$ be the tangle sum of $D$ and $D^\ast$.
Then the knot $K$ is regarded as the numerator $N(\tilde{T}+T_0)$ of the tangle sum $\tilde{T}+T_0$ (Figure \ref{diagramofpfofmaintheorem01}).

\begin{figure}[h]
	\centerline{\includegraphics[keepaspectratio]{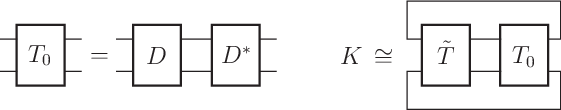}}
\caption{$K=N(\tilde{T}+T_0)$}
\label{diagramofpfofmaintheorem01}
\end{figure}

Applying Conway's fraction formula (see, \cite[Theorem 7.9.1]{Cr2004}, \cite[Theorem 2.3]{Ka1981})
to $\nabla_{N(\tilde{T}+T_0)}(z)$,
we obtain the following formula,
and then $\nabla_K(z)$ is equivalent to the right-hand side of the formula.
\begin{align*}
\nabla_{N(\tilde{T}+T_0)}(z)
=\nabla_{N(\tilde{T})}(z)\nabla_{D(T_0)}(z)+\nabla_{D(\tilde{T})}(z)\nabla_{N(T_0)}(z)
\end{align*}

Where the denominator $D(T_0)$ of $T_0$ is constructed by connecting the left-side endpoints of $T_0$ to each other
and by also connecting the right-side endpoints.

In the above formula, $\nabla_{N(T_0)}(z)=0$ by Lemma \ref{numeratorofT0} below.
Since the knot $D(T_0)$ is equivalent to the connected sum
$\hat{K}\sharp\hat{K}^\ast$ of the partial knot $\hat{K}$
and its mirror image $\hat{K}^\ast$,
it turns out $\nabla_{D(T_0)}(z)=\left(\nabla_{\hat{K}}(z)\right)^2$.
Then we obtain
\begin{align*}
\nabla_K(z)
=\nabla_{N(\tilde{T})}(z)\nabla_{D(T_0)}(z)
=\nabla_{N(\tilde{T})}(z)\left(\nabla_{\hat{K}}(z)\right)^2.
\end{align*}
The above formula means $\Delta_K(t)=\Delta_{K'}(t)\left(\Delta_{\hat{K}}(t)\right)^2$,
so we can conclude Theorem \ref{maintheorem} (1).

The following lemma is a special case of \cite[Theorem 2.3]{L2000}.
\begin{lemma}\label{numeratorofT0}
$\nabla_{N(T_0)}(z)=0$
\end{lemma}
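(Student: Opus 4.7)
My plan is to apply the same Conway fraction formula used above, now to the decomposition $T_0 = D + D^{\ast}$:
\[
\nabla_{N(T_0)}(z) = \nabla_{N(D)}(z)\,\nabla_{D(D^{\ast})}(z) + \nabla_{D(D)}(z)\,\nabla_{N(D^{\ast})}(z),
\]
and then to show that the two terms on the right cancel thanks to the mirror relation between $D$ and $D^{\ast}$.

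The denominator closures are straightforward: the tangle $D$ arises from a diagram of $\hat K$ by removing its two closing arcs, so $D(D) = \hat K$ and, symmetrically, $D(D^{\ast}) = \hat{K}^{\ast}$. Since the Conway polynomial of a knot is invariant under mirror reflection, $\nabla_{D(D^{\ast})}(z) = \nabla_{\hat K}(z)$. For the other pair, the strand connectivity of $D$ (which internally joins the two upper endpoints and joins the two lower endpoints, so that $\hat K = D(D)$ is a knot) forces $N(D)$ to be a $2$-component link. Its Conway polynomial therefore contains only odd powers of $z$, and the general identity $\nabla_{L^{\ast}}(z) = \nabla_L(-z)$ then gives $\nabla_{N(D^{\ast})}(z) = -\nabla_{N(D)}(z)$. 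Substituting everything into the fraction formula turns the right-hand side into $\nabla_{\hat K}(z)\bigl(\nabla_{N(D)}(z) - \nabla_{N(D)}(z)\bigr) = 0$.

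The main obstacle I foresee is confirming the strand connectivity of $D$ directly from Figure \ref{maintheorem_diagram}, namely that the two upper endpoints of the tangle are joined by an internal arc (and likewise for the two lower endpoints); this combinatorial fact is what makes $N(D)$ split into two disjoint loops rather than reconnect into a single knot, and thereby activates the parity/mirror sign flip that drives the cancellation. Once that is in place, the remainder of the argument reduces to the standard behaviour of $\nabla$ under mirror reflection together with Conway's fraction formula.
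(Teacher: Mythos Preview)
Your approach is genuinely different from the paper's and runs into a real gap at exactly the point you flagged.

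The paper does \emph{not} apply the fraction formula a second time.  Instead it argues directly with $L_0=N(T_0)$: it first checks that $L_0$ is a two-component link (this is where the paper, unlike you, explicitly treats \emph{both} possible connectivities of $D$), then uses the global reflection symmetry of the picture.  Reversing all orientations and taking the mirror image sends $L_0$ to an isotopic link, so $\nabla_{L_0}(z)=\nabla_{L_1}(z)=-\nabla_{L_2}(z)=-\nabla_{L_0}(z)$ and hence $\nabla_{L_0}=0$.  No decomposition of $T_0$ is needed.

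Your argument hinges on the claim that the two upper endpoints of $D$ are joined internally (and likewise the two lower ones), so that $N(D)$ has two components and the mirror relation forces $\nabla_{N(D^\ast)}=-\nabla_{N(D)}$.  But ``$D(D)=\hat K$ is a knot'' does \emph{not} force this connectivity: the diagonal connectivity (NW--SE and NE--SW inside $D$) also makes $D(D)$ a knot, and the paper's own proof of the lemma explicitly allows this case when it writes ``if the left upper endpoint is connected to the right lower endpoint in $D$, the left upper endpoint of $T_0$ is also connected to the right upper endpoint in $T_0$.''  In that diagonal case $N(D)$ is a \emph{knot}, so $\nabla_{N(D^\ast)}(z)=\nabla_{N(D)}(-z)=+\nabla_{N(D)}(z)$ and your two terms no longer cancel.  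Worse, with the orientation convention needed for Conway's fraction formula (two endpoints ``in'', two ``out'' in the pattern that makes both $N$- and $D$-closures oriented links), a diagonal-connectivity tangle cannot carry a compatible orientation at all, so the very expression $\nabla_{N(D)}(z)\nabla_{D(D^\ast)}(z)+\nabla_{D(D)}(z)\nabla_{N(D^\ast)}(z)$ is not well-posed in that case.

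So the ``main obstacle'' you foresaw is not merely a matter of reading the figure: the construction genuinely permits both connectivities of $D$, and your cancellation mechanism only covers one of them.  The paper's symmetry argument sidesteps this entirely because it works with $T_0$ as a whole, which always has the $\infty$-type connectivity regardless of which type $D$ has.
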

\begin{proof}
Let $L_0=N(T_0)$ be the numerator of $T_0$.
By the symmetry of $T_0$, $L_0$ is a two-component link.
In particular, if the left upper endpoint of the tangle $D$ is connected to the right upper endpoint in $D$,
then the left upper endpoint of $T_0$ is connected to the right upper endpoint in $T_0$.
Similarly, if the left upper endpoint is connected to the right lower endpoint in $D$,
the left upper endpoint of $T_0$ is also connected to the right upper endpoint in $T_0$. 

We fix an orientation of the diagram $D$ and put the opposite orientation on $D^\ast$
in order to provide a natural orientation for the tangle $T_0$.
Let $L_1$ be a link whose orientations of all components are opposite to $L_0$, and $L_2$ the mirror image of $L_1$ (Figure \ref{diagramofnumeratorofT0}).
In Figure \ref{diagramofnumeratorofT0}, $c_i^\ast$ is the crossing point in $D^\ast$ corresponding to a crossing point $c_i$ in $D$. 

\begin{figure}[h]
	\centerline{\includegraphics[keepaspectratio]{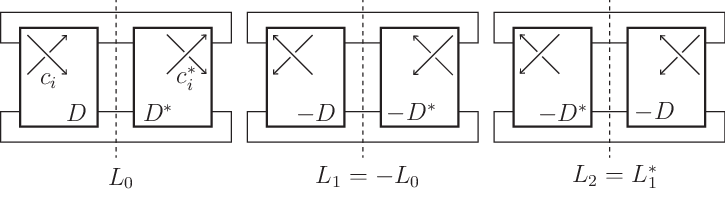}}
\caption{$L_0$, $L_1$, $L_2$}
\label{diagramofnumeratorofT0}
\end{figure}

By properties of Conway polynomial for orientation reversing and mirror images, $\nabla_{L_0}(z)=\nabla_{L_1}(z)=(-1)\nabla_{L_2}(z)$ since the number of components of $L_1$ is two.
Nevertheless, the diagram of $L_2$ is obtained by reflecting the diagram of $L_0$ at the axis indicated by the dotted line in Figure \ref{diagramofnumeratorofT0}.
Then $L_0$ is equivalent to $L_2$ and it follows $\nabla_{L_0}(z)=-\nabla_{L_2}(z)=-\nabla_{L_0}(z)$.
Therefore $\nabla_{L_0}(z)=0$.

\end{proof}

\subsection{Proof of Theorem \ref{maintheorem} (2)}
The proof of Theorem \ref{maintheorem} (2) is almost analogous to the proof of Theorem \ref{symunionlongitude}.
However, the tangle $T$ in Theorem \ref{maintheorem} is not just a twisted tangle.
Therefore, we formulate the proof with more details.

We fix an orientation of $\hat{K}$.
The arcs in diagram $D$ of $\hat{K}$ are labeled $x_1, x_2, \dots, x_m$,
and the corresponding arcs in $D^\ast$ are labeled $x_1^\ast, x_2^\ast, \dots, x_m^\ast$.
The arcs which will be changed to the arcs with endpoints of the tangle $D$ and $D^\ast$ are labeled $x_b$, $x_a$, $x_a^\ast$, $x_b^\ast$ as in the left side of Figure \ref{pfmainthm2_01},
where the each of the arcs $x_b$ and $x_a$ is one of the arcs $x_1, x_2, \dots, x_m$
and also $x_b^\ast$ and $x_a^\ast$ are the corresponding ones.
As in the proof of Theorem \ref{maintheorem}(1), we put the opposite orientation of $D$ on $D^\ast$. 

\begin{figure}[h]
	\centerline{\includegraphics[keepaspectratio]{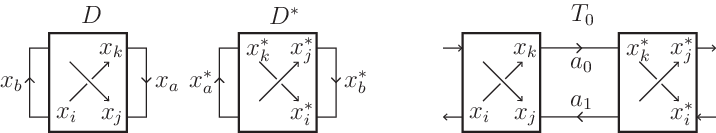}}
\caption{arcs of $D$, $D^\ast$, $T_0$}
\label{pfmainthm2_01}
\end{figure}

In the tangle sum $T_0$, the arcs connecting $D$ and $D^\ast$ are labeled $a_0$ and $a_1$ (the right side of Figure \ref{pfmainthm2_01}).
The arcs in the tangle $T$ are labeled $s_1, s_2, \dots, s_n$, especially the four arcs $s_1, \dots, s_4$ are labeled as in Figure \ref{pfmainthm2_02} which contain the endpoints of $T$.
These arcs are equivalent to some arcs in $D$ and $D^\ast$ as $s_i=x_{\sigma_i}$ for $i=1,2,3,4$, $\sigma_i\in\{1,2,\dots, m\}$.
Then, all arcs in the diagram of $K$ are labeled,
and the orientations of these arcs are induced from a fixed orientation of $\hat{K}$.
We regard these labels as the Wirtinger generators of the knot group $G(K)$.

\begin{figure}[h]
	\centerline{\includegraphics[keepaspectratio]{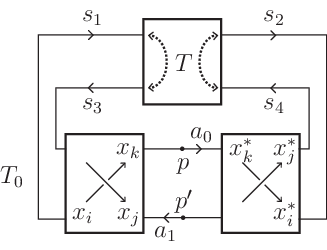}}
\caption{arcs of $D$, $D^\ast$, and $T$, the oriented diagram of $K$}
\label{pfmainthm2_02}
\end{figure}

Let $p$ and $p'$ be points on the arcs $a_0$ and $a_1$ as in Figure \ref{pfmainthm2_02}.
The word $\ell$ of longitude $\lambda_K$ in $G(K)$ is obtained by the following procedure.

Starting from the point $p$ and moving forward with the orientation of $K$,
we add a word $w^\ast=y^\ast {x^\ast}^{-1}$ or $w^\ast={y^\ast}^{-1}x^\ast$ from right side
when we go through each under crossing of the diagram $D^\ast$
according to that crossing is positive or negative,
and also add a word $s_j s_k^{-1}$ or $s_j^{-1} s_k$ of the diagram $T$ as in Figure \ref{pfmainthm2_03}.
The word obtained by repeating this procedure until reaching the point $p'$ is denoted by $\ell_+$.
Similarly, we obtain the word $\ell_-$ by reading words at each under-crossing point, as shown on the left of Figure \ref{pfmainthm2_03}, 
starting at $p$ and moving backward until we reach $p'$.

\begin{figure}[h]
	\centerline{\includegraphics[keepaspectratio]{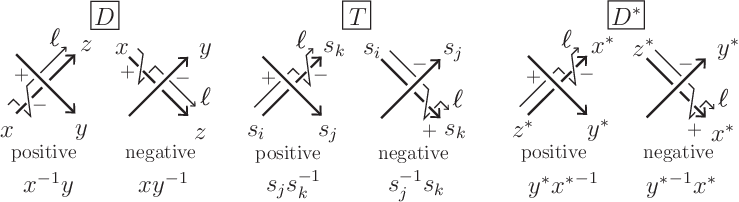}}
\caption{words on each crossing}
\label{pfmainthm2_03}
\end{figure}

Let a word $\ell$ be the concatenation $\ell=\ell_-\cdot\ell_+$.
By the above construction, the loop corresponding to $\ell$ is parallel to $K$, and the linking number between the loop and the knot $K$ is zero,
so $\ell$ is equivalent to the element $\lambda_K$ of the longitude of $K$.

We shall see a precise structure of words in $\ell$ as follows.
Let $r_1^\ast$ be the first under-crossing point encountered when starting from $p$ and moving forward,
and then $r_1$ be the first under-crossing point corresponding to $r_1^\ast$ when moving backward from $p$.
Corresponding to whether $r_1^\ast$ is a positive crossing or negative crossing, the words in $\ell$ corresponding to the crossing $r_1$ and $r_1^\ast$ in the diagrams $D$ and $D^\ast$ are shown as in Figure \ref{pfmainthm2_04}.
 
\begin{figure}[h]
	\centerline{\includegraphics[keepaspectratio]{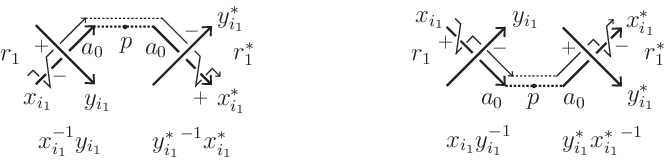}}
\caption{$(r_1, r_1^\ast)=(\text{positive},\text{negative})$ or $(\text{negative},\text{positive})$}
\label{pfmainthm2_04}
\end{figure}

Moving forward and backward, the words $r_k$ and $r_k^\ast$ are similarly obtained as shown in Figure \ref{pfmainthm2_05}.
In the tangle $T$, the words corresponding to the under-crossings
through which we pass are also $s_j s_k^{-1}$ or $s_j^{-1}s_k$
as illustrated in the middle of Figure \ref{pfmainthm2_03}.

\begin{figure}[h]
	\centerline{\includegraphics[keepaspectratio]{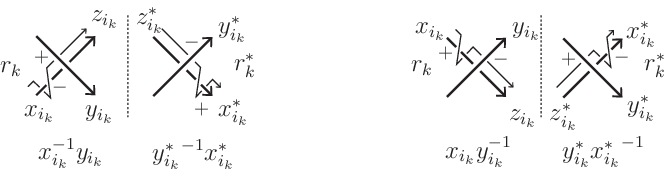}}
\caption{$(r_k, r_k^\ast)=(\text{positive},\text{negative})$ or $(\text{negative},\text{positive})$}
\label{pfmainthm2_05}
\end{figure}

By the above construction, $\ell_+$ and $\ell_-$ have the following form of words.
\begin{align*}
\ell_+=w_1^\ast w_2^\ast \cdots w_i^\ast  S_+ w_{i+1}^\ast \cdots w_m^\ast ,\;\;
\ell_-=w_m\cdots w_{i+1} S_- w_i \cdots w_2w_1
\end{align*}
In the above formula, the pair $(w_k, w_k^\ast)$ is equal to
$(x_{i_k}^{-1}y_{i_k}, {y_{i_k}^\ast}^{-1} x_{i_k}^\ast)$ or
$(x_{i_k} y_{i_k}^{-1}, y_{i_k}^\ast {x_{i_k}^\ast}^{-1})$
for $k=1,2,\cdots m$,
and $S_+$ and $S_-$ are constructed by products of the word $s_j s_k^{-1}$ or $s_j^{-1}s_k$. 

The Wirtinger presentations of the groups $G(\hat{K})$ and $G(K)$ are as follows by using the labels defined before.
\begin{align*}
G(\hat{K})
&=\langle \hat{x}_1, \hat{x}_2, \dots, \hat{x}_m \mid \hat{r}_1, \hat{r}_2, \dots, \hat{r}_m\rangle \\
G(K)
&=\langle x_1, x_2, \dots, x_m, x_1^\ast, x_2^\ast, \dots, x_m^\ast , s_1, s_2, \dots, s_n \mid \\ \relax
& \hspace{72pt} r_1, r_2, \dots, r_m, r_1^\ast, r_2^\ast, \dots, r_m^\ast, r_1^s, r_2^s, \dots, r_n^s \rangle
\end{align*}
The elements $\hat{x}_i$ in $G(\hat{K})$ are the same as the elements $x_i$ in the diagram $D$.
Corresponding to the arcs in Figure \ref{pfmainthm2_02},
some arcs are identified as follows.
\begin{align*}
a_0&=x_{a_0}=x_{a_0}^\ast, a_1=x_{a_1}=x_{a_1}^\ast \\
s_1&=x_{\sigma_1}, s_2=x_{\sigma_2}^\ast, s_3=x_{\sigma_3}, s_4=x_{\sigma_4}^\ast, \;\; \sigma_1,\sigma_2,\sigma_3,\sigma_4 \in \{1,2,\dots m\}
\end{align*} 
The relations $r_i$, $i=1,2,\dots, m$ correspond to the crossing points in the diagram $D$,
and also $r_i^\ast$ in $D^\ast$ corresponds to $r_i$ for $i=1,2,\dots, n$.
The relations $r_i^s$ correspond to the crossing points in $T$.

Next, we define a map $\varphi: G(K)\to G(\hat{K})$ as follows.
\begin{equation*}
\varphi:G(K)\to G(\hat{K}):\;
\left\{\begin{array}{l}
 x_i \mapsto \hat{x}_i,\; x_i^\ast \mapsto \hat{x}_i, \;\; i=1,2,\dots, m \\[5pt]
 s_i \mapsto \hat{x}_b, \;\; i=1,2,\dots, n \\[5pt]
x_{a_0}=x_{a_0}^\ast \mapsto \hat{x}_a,\; x_{a_1}=x_{a_1}^\ast \mapsto \hat{x}_a
\end{array}\right.
\end{equation*}

The relation $\hat{r}_i$ in the Wirtinger presentation of $G(\hat{K})$ is equal to
$\hat{r}_i=\hat{x}_i\hat{x}_j\hat{x}_k^{-1}\hat{x}_j^{-1}$ or $\hat{x}_j\hat{x}_i\hat{x}_j^{-1}\hat{x}_k^{-1}$
for some $i,j,k \in \{1,2,\dots,m\}$
corresponding to the crossing $\hat{r}_i$ is positive or negative.
And also the pair of relations $(r_i, r_i^\ast)$ are equal to 
$(r_i, r_i^\ast)=(x_i x_j x_k^{-1} x_j^{-1}, x_i^\ast x_j^\ast {x_k^\ast}^{-1} {x_j^\ast}^{-1})$ or
$(r_i, r_i^\ast)=(x_j x_i x_j^{-1} x_k^{-1}, x_j^\ast x_i^\ast {x_j^\ast}^{-1} {x_k^\ast}^{-1})$
corresponding to the crossing $r_i$ is positive or negative
as in Figure \ref{pfmainthm2_06}.

\begin{figure}[h]
	\centerline{\includegraphics[keepaspectratio]{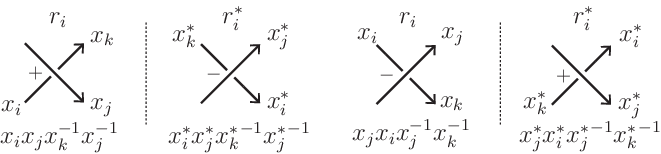}}
\caption{$(r_i, r_i^\ast)=(\text{positive},\text{negative})$ or $(\text{negative},\text{positive})$}
\label{pfmainthm2_06}
\end{figure}

The relation $r_i^s$ is equal to $r_i^s=s_i s_j s_k^{-1} s_j^{-1}$ or $r_i^s=s_j s_i s_j^{-1} s_k^{-1}$
for some $i,j,k\in\{1,2,\dots, n\}$
corresponding to the crossing $r_i^s$ is positive or negative.

The map $\varphi$ maps each of these relations to the corresponding relation $\hat{r}_i$ or the trivial element in $G(\hat{K})$
\begin{align*}
\varphi(r_i)=\varphi(r_i^\ast)
&=\hat{x}_i \hat{x}_j \hat{x}_k^{-1} \hat{x}_j^{-1} \;\;\text{or}\;\; \hat{x}_j \hat{x}_i \hat{x}_j^{-1} \hat{x}_k^{-1} = \hat{r}_i \\
\varphi(r_i^s)
&=\hat{s}\hat{s}\hat{s}^{-1}\hat{s}^{-1}=1.
\end{align*}

Therefore, the map $\varphi$ is a surjective homomorphism.

As we saw before, the word $\ell$ of the longitude has the following form.
\begin{equation*}
\ell = \ell_-\cdot \ell_+
=w_m\cdots w_{i+1}S_- w_i\cdots w_2 w_1 w_1^\ast w_2^\ast \cdots w_i^\ast S_+ w_{i+1}^\ast \cdots w_m^\ast
\end{equation*}
The subword $w_1w_1^\ast$ is equal to
$$w_1w_1^\ast=
x_{i_1}^{-1}y_{i_1}{y_{i_1}^\ast}^{-1}x_{i_1}^\ast \;\;\text{or}\;\; x_{i_1}y_{i_1}^{-1}y_{i_1}^\ast {x_{i_1}^\ast}^{-1}
$$
then $\varphi(w_1w_1^\ast)=1$.
Similarly, we obtain
$\varphi(w_iw_i^\ast)=1$ for $i=2,\dots,m$,
$\varphi(S_-)=\varphi(S_+)=(\text{products of}\;(\hat{s}\hat{s}^{-1})\;\text{or}\;(\hat{s}^{-1}\hat{s}))=1$.

By the arguments above, we can see that $\varphi(\lambda_K)=\varphi(\ell)=1$,
then the proof of Theorem \ref{maintheorem}(2) is completed.

\subsection{Proof of Corollary\ref{coro1.5}}
Let $T$ be a rational tangle represented by a rational number $\beta/\alpha$ which satisfies $\mathrm{gcd}(\alpha, \beta)=1$, $\alpha>0$, and $-\alpha<\beta<\alpha$.
Then, the numerator $N(T)$ is a 2-bridge knot or link $b(\beta/\alpha)$.
If $\alpha$ is even, $T$ satisfies the assumptions of Theorem \ref{maintheorem} that the left-side endpoints are connected in $T$ and the right-side endpoints are also connected.
In this case, there is a continued fraction $[c_1, c_2, \dots, c_r]$ of $\beta/\alpha$
satisfies that all entries $c_i$ are even and the length $r$ is odd  (see \cite[Proposition 12.17]{BZ3rd}),
and $N(T)$ is a 2-bridge knot.
Since a 2-bridge knot $b(\beta/\alpha)$ is fibered if and only if all entries satisfy $|c_i|=2$ \cite[Proposition 12.20]{BZ3rd},
we can construct a non-fibered 2-bridge knot by taking some entries $|c_i|>2$.
Therefore, we can see that there are infinitely many non-fibered 2-bridge knots whose rational tangle $T$ satisfies the assumptions of Theorem \ref{maintheorem}.

Let $T$ be such a rational tangle,
then the numerator $N(T)$ is a non-fibered 2-bridge knot.
Applying the construction of Theorem \ref{maintheorem} to $\hat{K}$ and $T$,
the knot $K$ constructed by way of Theorem \ref{maintheorem} satisfies $K\geq \hat{K}$,
and the epimorphism $\varphi: G(K)\to G(\hat{K})$ maps the longitude of $K$ to the trivial element in $G(\hat{K})$.

Since 2-bridge knots are alternating \cite[Proposition 12.15]{BZ3rd}, a 2-bridge knot is fibered if the Alexander polynomial of the knot is monic \cite{Mu1963}.
Now the 2-bridge knot $N(T)$ is not fibered, then the Alexander polynomial of $N(T)$ is not monic.
Therefore, the Alexander polynomial of $K$ is also not monic, so $K$ is not a fibered knot.
This completes the proof of Corollary \ref{coro1.5}.

\section{Examples}
The complete list of pairs of knots that $K_1 \geq K_2$ up to 10 crossings is presented by Kitano and Suzuki \cite{KS2005}.
The maps that realize these epimorphisms are explicitly presented in \cite{KS2008}. 
In cases when $K_2$ is $3_1$ or $4_1$ in the list,
all of the knots with an epimorphism that maps the longitude of $G(K_1)$ to the trivial element of $G(K_2)$ are listed below:
\begin{align*}
8_{10}, 8_{20}, 9_{24}, 10_{62}, 10_{65}, 10_{77}, 10_{82}, 10_{87}, 10_{99}, 10_{140}, 10_{143} &\geq 3_1 \\
10_{59}, 10_{137} &\geq 4_1
\end{align*}

All knots in the list above except for the knots $10_{82}$ and $10_{87}$
can be constructed by the way suggested in Theorem \ref{maintheorem}.
Furthermore, all knots in the list above except for the knots $10_{82}$, $10_{87}$ and $10_{99}$ are all Montesinos knots \cite{HO1989}, \cite{D2001}.
We take diagrams of these Montesinos knots as shown in Figure \ref{exofmainthm_01}.
The tangles $D$ and $T(\beta/\alpha)$ are rational,
and the denominator $D(D)$ is equivalent to $3_1$ or $4_1$.
A rational number $\beta/\alpha$ presenting a tangle $T(\beta/\alpha)$ is decomposed into an integral part $e$
and a rational part $\beta'/\alpha$, which satisfies $\alpha >0$ and $-\alpha < \beta' < \alpha$.
Following this notation, 
the structures of these Montesinos knots are detailed in Table \ref{tableofexample1},
which can be identified from the diagrams using SnapPy \cite{SnapPy}.

\begin{figure}[h]
	\centerline{\includegraphics[keepaspectratio]{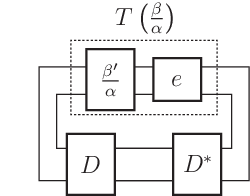}}
\caption{the diagram of a Montesinos knot in the form of Theorem \ref{maintheorem}}
\label{exofmainthm_01}
\end{figure}

\begin{table}[h]
\begin{tabular}{|c||p{48pt}|p{48pt}|p{48pt}|p{48pt}|p{48pt}|}
\hline
$K$ & \hfil $8_{10}$\hfil & \hfil$8_{20}\star$\hfil & \hfil$9_{24}$\hfil & \hfil$10_{62}$\hfil & \hfil$10_{65}$\hfil \\ 
\hline
$(\beta'/\alpha, e)$ & \hfil$(1/2,-2)$\hfil & \hfil$(1/2, 0)$\hfil & \hfil$(-1/2, -2)$\hfil & \hfil$(-1/4, -1)$\hfil & \hfil$(1/4, -2)$\hfil \\
\hline
$N(T(\beta/\alpha))$ & \hfil$3_1$\hfil & \hfil trivial\hfil & \hfil$4_1$\hfil & \hfil$5_1$\hfil & \hfil$5_2$\hfil \\
\hline
\end{tabular}

\vspace{3pt}
\begin{tabular}{|c||p{48pt}|p{48pt}|p{48pt}||p{46pt}|p{48pt}|}
\hline
$K$ & \hfil$10_{77}$\hfil & \hfil$10_{140}\star$\hfil & \hfil$10_{143}$\hfil & \hfil$10_{59}$\hfil & \hfil$10_{137}\star$\hfil \\ 
\hline
$(\beta'/\alpha, e)$ & \hfil$(-1/2, -3)$\hfil & \hfil$(1/4, 0)$\hfil & \hfil$(-3/4, 0)$\hfil & \hfil$(1/2,-2)$\hfil & \hfil$(1/2, 0)$\hfil \\
\hline
$N(T(\beta/\alpha))$ & \hfil$5_2$\hfil & \hfil trivial\hfil & \hfil$3_1$\hfil & \hfil$3_1$\hfil & \hfil trivial\hfil \\
\hline
\end{tabular} \\[3pt]
(knots with the mark $\star$ are a symmetric union of a partial knot) \\[3pt]
\caption{Montesinos knots arising from the construction of Theorem \ref{maintheorem}}
\label{tableofexample1}
\end{table}

The knot $10_{99}$ is constructed as follows.
We take two connected sums $3_1\sharp 3_1^\ast$.
Following the construction in Theorem \ref{maintheorem},
one is placed in the position of the tangle sum $D+D^\ast$,
while the other is positioned in the tangle $T$ as shown on the left of Figure \ref{exofmainthm_02}.
Since the tangle $T$ satisfies the assumption of Theorem \ref{maintheorem}, the knot $10_{99}$ is obtained from the construction in Theorem \ref{maintheorem} and satisfies the properties of its conclusion.
In particular, $\Delta_{10_{99}}(t)=(t^2-t+1)^4=(\Delta_{3_1}(t))^4=(\Delta_{3_1\sharp 3_1^\ast}(t))\cdot(\Delta_{3_1}(t))^2$.

\begin{figure}[h]
	\centerline{\includegraphics[keepaspectratio]{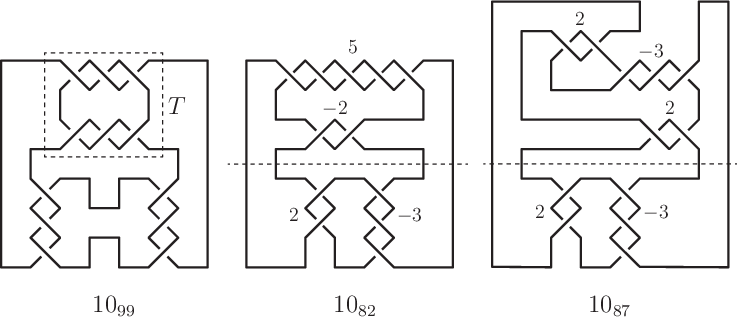}}
\caption{$10_{99}$, $10_{82}$, $10_{87}$}
\label{exofmainthm_02}
\end{figure}

Regarding the remaining knots $10_{82}$ and $10_{87}$,
Masakazu Teragaito has informed us of the following
Proposition \ref{notcomefrommaintheorem} and Lemma \ref{branchedcover}.

\begin{prop}\label{notcomefrommaintheorem}
The knots $10_{82}$ and $10_{87}$ do not have a presentation coming from the construction of Theorem \ref{maintheorem}, for which the partial knot $\hat{K}$ is $3_1$.
\end{prop}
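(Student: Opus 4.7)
The plan is to combine an Alexander-polynomial preliminary with the double branched cover constraint provided by the forthcoming Lemma \ref{branchedcover}.

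First, I would perform a direct factorization check. A brief computation yields
\begin{align*}
\Delta_{10_{82}}(t) &= (t^2-t+1)^3 = \Delta_{3_1}(t)\cdot\bigl(\Delta_{3_1}(t)\bigr)^2,\\
\Delta_{10_{87}}(t) &= (t^2-3t+1)(t^2-t+1)^2 = \Delta_{4_1}(t)\cdot\bigl(\Delta_{3_1}(t)\bigr)^2.
\end{align*}
Hence, by Theorem \ref{maintheorem}(1), if either knot arose from the construction with $\hat{K}=3_1$, the companion knot $K'=N(T)$ would be forced to have the Alexander polynomial of $3_1$ or of $4_1$, respectively. In particular the Alexander polynomial alone does not yet rule the construction out; both divisibilities by $(\Delta_{3_1}(t))^2$ hold.

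The decisive step is then to invoke Lemma \ref{branchedcover}. The construction provides a Conway sphere decomposition $K = N(\tilde T+T_0)$, $T_0=D+D^\ast$, and this decomposition lifts to an essential torus decomposition of the double branched cover $\Sigma_2(K)$ that is intrinsically $\mathbb{Z}/2$-symmetric along the central sphere separating $D$ from $D^\ast$. When $\hat{K}=3_1$, the symmetric piece is constrained, via the quotient, to look like a prescribed Dehn-filling pattern on $\Sigma_2(3_1)=L(3,1)$ and its mirror. With this structural constraint in hand, the proof reduces to checking that no such pattern appears inside $\Sigma_2(10_{82})$ or $\Sigma_2(10_{87})$. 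I would carry out this check by computing the two double branched covers in SnapPy from the standard surgery presentations, extracting their JSJ decompositions, and comparing the resulting pieces and gluing data against the pattern imposed by Lemma \ref{branchedcover} with $L(3,1)$ on the prescribed side.

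I expect the main obstacle to be precisely this last matching. One must show the absence of the prescribed $L(3,1)$-piece across \emph{every} essential torus in $\Sigma_2(K)$, not only those coming from a given projection of $K$; moreover the comparison is sensitive to orientations and to the framing with which the central Conway torus glues the two halves. Once Lemma \ref{branchedcover} is established in a form that controls the gluing, this becomes a finite, computer-assisted inspection, but the genuine content of the argument lies in certifying that no arrangement of JSJ tori in $\Sigma_2(10_{82})$ and $\Sigma_2(10_{87})$ can accommodate $L(3,1)$ as a symmetric quotient piece.
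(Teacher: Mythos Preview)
Your high-level strategy---compare the JSJ decomposition of the double branched cover forced by the construction against the actual JSJ decomposition supplied by Lemma~\ref{branchedcover}---is exactly what the paper does. But there is a genuine error in the identification of the relevant piece, and the execution you propose is more circuitous than necessary.

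The tangle $T_0=D+D^\ast$ with $\hat K=3_1$ is the Montesinos tangle $M(1/3,-1/3)$, and its double branched cover is the Seifert piece $D^2(1/3,-1/3)$: a Seifert fibration over the disk with two exceptional fibers of multiplicity $3$, hence a manifold with torus boundary. It is \emph{not} the closed lens space $L(3,1)=\Sigma_2(3_1)$, nor a Dehn-filling pattern thereon; the closed lens space arises only after you close the tangle. Your description of the obstruction as ``absence of an $L(3,1)$-piece'' is therefore off target, and the talk of framings, orientations, and symmetric quotient pieces is beside the point.

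The paper's argument is then short and does not require SnapPy. Since $10_{82}$ and $10_{87}$ are not Montesinos knots, the Conway sphere separating $T_0$ from $T$ lifts to an \emph{essential} torus, so $D^2(1/3,-1/3)$ would have to appear as a JSJ piece of $\Sigma_2(K)$. But the proof of Lemma~\ref{branchedcover} already exhibits the JSJ decomposition of $\Sigma_2(10_{82})$ and $\Sigma_2(10_{87})$ explicitly: each is two Seifert pieces $D^2(\beta_1/\alpha_1,\beta_2/\alpha_2)$ coming from the two Montesinos tangles in Figure~\ref{exofmainthm_02}, glued along a single essential torus with fibers meeting transversely. One simply reads off that neither of those two pieces is $D^2(1/3,-1/3)$, and uniqueness of the JSJ decomposition finishes the proof. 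Your Alexander-polynomial preliminary is correct but plays no role.
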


In order to prove Proposition \ref{notcomefrommaintheorem}, 
we first prove the following Lemma.

\begin{lemma}\label{branchedcover}
Either of the double-branched covers along $10_{82}$ and $10_{87}$ is a graph manifold. 
\end{lemma}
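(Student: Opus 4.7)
The plan is to identify each of $10_{82}$ and $10_{87}$ as an arborescent knot (algebraic in Conway's sense), and then deduce the conclusion from the standard correspondence between Conway spheres in $S^3$ and essential tori in the double-branched cover.

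First, I would produce a Conway decomposition of each knot: a finite collection of essential $2$-spheres $S_1,\dots,S_k \subset S^3$, each meeting $K$ transversely in four points, which cut $(S^3,K)$ into $3$-balls containing rational tangles. Both $10_{82}$ and $10_{87}$ are known to be arborescent (they appear, for instance, in the Bonahon--Siebenmann list of algebraic knots, or equivalently an explicit Conway notation can be read off from the standard minimal diagrams), so such a decomposition exists. Note, however, that the gluing pattern is not that of a Montesinos knot --- which is precisely why these two knots do not admit a presentation of the form of Figure \ref{exofmainthm_01} and were excluded from Table \ref{tableofexample1}.

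Second, I would lift the decomposition to the double-branched cover $\Sigma_2(K)$. Since each $S_i$ meets $K$ in four points, its preimage $T_i$ in $\Sigma_2(K)$ is a $2$-torus. Cutting $\Sigma_2(K)$ along $T_1 \cup \dots \cup T_k$ produces, on each side, the double cover of a $3$-ball branched along a rational tangle; by a classical computation this cover is a solid torus. Consequently $\Sigma_2(K)$ is obtained by gluing finitely many solid tori along boundary tori, and so is a graph manifold.

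The main obstacle is the first step: producing the explicit Conway decomposition with rational complementary pieces. Once this structural fact is in hand, the essential tori in the cover and the Seifert-fibered structure on each complementary piece follow immediately from the standard branched-cover machinery, and the graph-manifold conclusion is automatic.
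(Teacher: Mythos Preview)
Your approach is correct in outline and reaches the right conclusion, but it differs from the paper's argument in a way worth noting. The paper does not decompose all the way down to rational tangles. Instead it exhibits a \emph{single} Conway sphere $S$ splitting each of $10_{82}$ and $10_{87}$ into two Montesinos tangles $M_1$, $M_2$ (this is the content of Figure~\ref{exofmainthm_02}), applies the Montesinos trick to see that each $M_i$ lifts to a Seifert piece $D^2(\beta_1/\alpha_1,\dots,\beta_n/\alpha_n)$, and then checks explicitly that the lift $\tilde S$ is essential by observing that the regular fibers coming from the two sides meet in a single point on $\tilde S$ and hence cannot be matched. Your route---full arborescent decomposition, rational tangles lifting to solid tori, hence a graph manifold---is the more general Bonahon--Siebenmann/Montesinos picture and certainly suffices for the bare statement of the lemma.

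Two remarks. First, your description of the Conway decomposition is slightly loose: for a general arborescent link the complementary regions of a system of disjoint Conway spheres are not all $3$-balls---interior vertices of the tree give multi-holed balls---so the phrase ``cut $(S^3,K)$ into $3$-balls containing rational tangles'' needs adjusting (the covers of those interior pieces are Seifert over planar surfaces, which is still fine for the graph-manifold conclusion). Second, and more importantly for the paper's purposes, the explicit two-piece decomposition with identified Seifert invariants is exactly what is used in the proof of Proposition~\ref{notcomefrommaintheorem}: one must know the actual JSJ pieces in order to argue that $D^2(1/3,-1/3)$ does \emph{not} occur. Your argument gives ``graph manifold'' but not the specific Seifert data, so if you proceed this way you will have to do additional work later.
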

\begin{proof}
As in the middle and the right of Figure \ref{exofmainthm_02},
these knots $10_{82}$ and $10_{87}$ are constructed by connecting two Montesinos tangles $M_1$ and $M_2$,
indicated in the form shown on the left of Figure \ref{graphlem_01},
where the Montesinos tangle $M_i$ is a sum of rational tangles, as shown on the right of Figure \ref{graphlem_01}.

\begin{figure}[h]
	\centerline{\includegraphics[keepaspectratio]{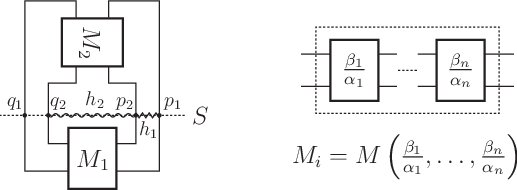}}
\caption{a structure of $10_{82}$ and $10_{87}$, Montesinos tangle $M_i$}
\label{graphlem_01}
\end{figure}

Each Montesinos tangle $M_1$ and $M_2$ is separated by a sphere $S$ in $S^3$.
Let $p_1, p_2, q_1, q_2$ be the intersections of the knot and $S$,
and $h_1$ be an arc connecting $p_1$ and $p_2$, and $h_2$ be an arc connecting $p_2$ and $q_2$ on $S$ as shown in Figure \ref{graphlem_01}.
By the Montesinos trick \cite{Mo1975},
a double-branched cover along a Montesinos tangle
$M\left(\frac{\beta_1}{\alpha_1}, \dots, \frac{\beta_n}{\alpha_n}\right)$
is a Seifert piece $D^2\left(\frac{\beta_1}{\alpha_1}, \dots, \frac{\beta_n}{\alpha_n}\right)$ whose base space is a 2-disk with
$n$ singular fibers.
The lift of $S$ is a torus $\tilde{S}$ with two loops $\tilde{h}_1$ and $\tilde{h}_2$ which are lifts of $h_1$ and $h_2$.
The loops $\tilde{h}_1$ and $\tilde{h}_2$ become regular fibers on each boundary of the Seifert pieces.
We can see that each regular fiber $\tilde{h}_1$ and $\tilde{h}_2$ intersect at one point.
Thus, each regular fiber made from $M_1$ and $M_2$ is not extended into the other Seifert pieces.
It turns out that the Seifert pieces are glued along the essential torus $\tilde{S}$.
Therefore, the double-branched cover along the knot is a graph manifold. 
\end{proof}

\begin{proof}[Proof of Proposition \ref{notcomefrommaintheorem}]
We assume that $10_{82}$ and $10_{87}$ are coming from the construction of Theorem \ref{maintheorem},
where the partial knot $\hat{K}$ is $3_1$.
Then, we can consider the Montesinos tangle $T_0=D+D^\ast=M(1/3,-1/3)$ as shown in Figure \ref{pf_of_prop_01}

\begin{figure}[h]
	\centerline{\includegraphics[keepaspectratio]{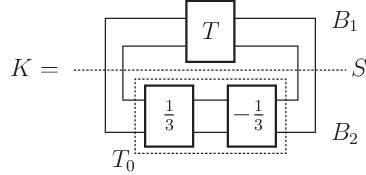}}
\caption{$K=10_{82}$ or $10_{87}$}
\label{pf_of_prop_01}
\end{figure}

Since these knots $10_{82}$ and $10_{87}$ are not Montesinos knots,
these knots are not the numerator of the tangle sum $\tilde{T}+T_0$.
Let $S$ be a separating sphere in $S^3$ as shown in Figure \ref{pf_of_prop_01},
$B_1$ and $B_2$ be 3-balls bounded by $S$,
and $M_K$ be a double branched cover of $S^3$ along $K$.
The lift $\tilde{S}$ of the sphere $S$ is an essential torus in $M_K$,
because the tangles $T$ in $B_1$ and $T_0$ in $B_2$ are not trivial.
Then $M_K$ has a Seifert piece $D^2(1/3, -1/3)$ by a decomposition of the torus $\tilde{S}$.

By Lemma \ref{branchedcover},
$M_K$ does not have a Seifert piece $D^2(1/3, -1/3)$.
It means these knots are not coming from the construction of Theorem \ref{maintheorem}.
\end{proof}

\begin{rem}
It remains to be seen whether
$3_1$ is the only non-trivial knot $\hat{K}$ that satisfies $10_{82}, 10_{87}\geq \hat{K}$. 
\end{rem}

\begin{rem}
A knot formed by summing and gluing some rational tangles is referred to as an arborescent knot \cite{Co1967}.
For the knots constructed in Theorem \ref{maintheorem}, if two tangles $D$ and $T$ are rational,
these knots are considered arborescent knots.
As shown in Figure \ref{exofmainthm_02}, the tangle elements of $10_{82}$ and $10_{87}$ are all rational.
Therefore, these knots are arborescent knots;
and as previously mentioned, they are not Montesinos knots.
The bridge number of these knots is three, according to KnotInfo \cite{KnotInfo}.
Jang classifies all 3-bridge arborescent links \cite{Ja2011},
and these arborescent knots are included in Theorem 1 (1) of her paper. 
\end{rem}

\vspace{8pt}
\noindent\textbf{Acknowledgements:}
The authors would like to express their gratitude to Kouki Taniyama for informing them of Conway's fraction formula, which is used in the proof of Theorem \ref{maintheorem}.
The authors would also like to express their gratitude to Masakazu Teragaito for informing them of Proposition \ref{notcomefrommaintheorem} and Lemma \ref{branchedcover}.
Additionally, they would also like to express their gratitude to Yuta Nozaki for his helpful comments on the draft version of this paper.
The first author is partially supported by JSPS KAKENHI Grant Number 19K03505.
The second author is partially supported by JSPS KAKENHI Grant Number 19K03460.

\end{document}